\documentclass[11pt]{article}

\usepackage[T1]{fontenc}
\usepackage[utf8]{inputenc}
\usepackage{amsmath,amssymb,amsthm}
\usepackage{geometry}
\usepackage{hyperref}

\hypersetup{colorlinks=true,linkcolor=blue,citecolor=blue,urlcolor=blue}

\newtheorem{theorem}{Theorem}
\newtheorem{lemma}{Lemma}

\title{A Median Version of Hardy's Inequality}
\author{Gangsong Leng\\
East China Normal University\\
\texttt{lenggangsong@163.com}}
\date{}

\begin{document}
\maketitle

\begin{abstract}
Motivated by a discrete inequality problem proposed by Duanyang Zhang as Problem 6 of the 2022 Spring NSMO, we prove a median version of Hardy's inequality. For a nonnegative function $f\in L^p(0,\infty)$, $p>1$, let $A(t)$ be the average of $f$ over $(0,t)$, and let $M(t)$ be the lower median of $f$ over $(0,t)$. We show that
\[
 \int_0^\infty |M(t)-A(t)|^p\,dt
 \leq 2^{1-p}\left(\frac p{p-1}\right)^p
 \int_0^\infty f(t)^p\,dt,
\]
and that the constant is best possible. The proof is based on a pointwise rearrangement estimate coming from the half-measure property of the median, followed by the classical Hardy inequality. A discrete form and its sharpness are also included.
\end{abstract}

\noindent\textbf{Keywords.} Hardy inequality; median; decreasing rearrangement; sharp constant.\medskip

\noindent\textbf{2020 Mathematics Subject Classification.} 26D15, 46E30.

\section{Introduction}

The arithmetic mean and the median are two familiar notions of central tendency. The mean reflects a global balance, while the median reflects the central position after ordering. A natural question is the following: if one considers, for each initial segment, the mean and the median of that segment, can the deviation between them be controlled by a global quantity?

This note is motivated by a problem proposed by Duanyang Zhang as Problem 6 of the 2022 Spring NSMO. The original problem asked for the least constant $\lambda$ such that, for every positive integer $n$ and all nonnegative real numbers $x_1,x_2,\ldots,x_n$,
\[
 \sum_{i=1}^n (m_i-a_i)^2\leq \lambda\sum_{i=1}^n x_i^2,
\]
where
\[
 a_i=\frac{x_1+x_2+\cdots+x_i}{i},
\]
and $m_i$ is a median of $x_1,x_2,\ldots,x_i$. The best constant in this problem is
\[
 \lambda=2.
\]

Behind this discrete problem lies a natural analytic structure. We extend it to a continuous $L^p$ form and obtain a median version of Hardy's inequality. The proof is simple in spirit: the half-measure property of the median gives a rearrangement estimate, and the classical Hardy inequality then yields the desired bound.

\section{From the discrete problem to the continuous problem}

Let $p>1$, and let $f\geq0$ with $f\in L^p(0,\infty)$. For $t>0$, set
\[
 A(t)=\frac1t\int_0^t f(s)\,ds,
\]
the average of $f$ over $(0,t)$. Let $M(t)$ be a median of $f$ on $(0,t)$. To avoid nonuniqueness, we take the lower median, namely
\[
 M(t)=\inf\left\{a\in\mathbb R:
 \operatorname{meas}\{s\in(0,t):f(s)\leq a\}\geq \frac t2\right\}.
\]
Since we consider only nonnegative functions, $M(t)\geq0$. This convention gives a measurable function $t\mapsto M(t)$; this follows, for instance, from the measurability of the distribution function
\[
 (t,a)\mapsto \operatorname{meas}\{s\in(0,t):f(s)\leq a\}.
\]
Hence the integrals involving $M(t)$ below are well defined.

We ask whether there exists a constant $C_p$ such that
\[
 \int_0^\infty |M(t)-A(t)|^p\,dt
 \leq C_p\int_0^\infty f(t)^p\,dt.
\]
The answer is affirmative, and the best constant can be determined exactly.

\section{A brief note on decreasing rearrangements}

Let $f\geq0$ be a measurable function on $(0,\infty)$. Its decreasing rearrangement, denoted by $f^*$, may be understood as the nonincreasing function obtained by rearranging the values of $f$ from largest to smallest.

For finite sequences this idea is completely transparent. Given numbers
\[
 x_1,x_2,\ldots,x_n,
\]
we arrange them in decreasing order as
\[
 y_1\geq y_2\geq\cdots\geq y_n.
\]
Then $y_1,y_2,\ldots,y_n$ are the decreasing rearrangement of the original sequence. The function $f^*$ is the continuous analogue of this operation.

We shall use only the following basic property of decreasing rearrangements: if $E\subset(0,\infty)$ is measurable and has measure $r$, then
\[
 \int_E f(s)\,ds\leq \int_0^r f^*(s)\,ds. \tag{1}
\]
Intuitively, among all sets of measure $r$, the largest possible integral is obtained by taking the part on which $f$ assumes its largest values. For background on decreasing rearrangements and their use in integral inequalities, see Bennett and Sharpley \cite{BS}.

This property is especially well suited to medians. By definition, the portion above the median occupies at most one half, and the portion below the median also occupies at most one half. Thus, when estimating the deviation between the mean and the median, it is enough to control the contribution coming from the largest half of the function values; this is precisely why decreasing rearrangements appear naturally.

\section{The core pointwise estimate}

The following lemma is the key point of the paper. It converts the median problem into a rearrangement estimate.

\begin{lemma}[Median rearrangement estimate]
Let $f\geq0$ be measurable. Define
\[
 A(t)=\frac1t\int_0^t f(s)\,ds,
\]
and let $M(t)$ be the lower median of $f$ on $(0,t)$. Then, for every $t>0$,
\[
 |M(t)-A(t)|\leq \frac1t\int_0^{t/2} f^*(s)\,ds. \tag{2}
\]
Here $f^*$ denotes the decreasing rearrangement of $f$ on the whole interval $(0,\infty)$.
\end{lemma}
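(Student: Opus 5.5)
Write $m=M(t)$ and split the argument into the two cases $A(t)\geq m$ and $A(t)<m$. In each case we write $tA(t)-tm$ as an integral of $f-m$ over $(0,t)$. We then use the half-measure property of the lower median to confine the relevant contribution to a set of measure at most $t/2$. Finally, property (1) replaces the integral over that set by $\int_0^{t/2}f^*$.

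First record the half-measure facts. Let $E_+=\{s\in(0,t):f(s)>m\}$ and $E_-=\{s\in(0,t):f(s)<m\}$. By right-continuity of $a\mapsto\operatorname{meas}\{s\in(0,t):f\leq a\}$, the infimum defining $M(t)$ is attained, so $\operatorname{meas}\{f\leq m\}\geq t/2$ and hence $\operatorname{meas}E_+\leq t/2$. For any $a<m$ we have $\operatorname{meas}\{f\leq a\}<t/2$. Letting $a\uparrow m$ gives $\operatorname{meas}E_-\leq t/2$. One must also check that $m<\infty$. This holds because $\operatorname{meas}\{s\in(0,t):f\le a\}\to t$ as $a\to\infty$, since $f$ is finite a.e.

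\emph{Case $A(t)\geq m$.} Here
\[
t(A(t)-m)=\int_0^t(f-m)\,ds\leq\int_{E_+}(f-m)\,ds\leq\int_{E_+}f\,ds .
\]
Let $r=\operatorname{meas}E_+\leq t/2$. By (1) the last integral is at most $\int_0^{r}f^*\le\int_0^{t/2}f^*$, using $f^*\geq0$.

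\emph{Case $A(t)<m$.} Here
\[
t(m-A(t))=\int_0^t(m-f)\,ds\leq\int_{E_-}(m-f)\,ds\leq m\operatorname{meas}E_-\leq \frac{t}{2}m ,
\]
and the estimate now requires $\frac t2 m\leq\int_0^{t/2}f^*$. Since $\operatorname{meas}\{s\in(0,t):f\geq m\}\geq t/2$, we can choose a set $F\subset\{f\ge m\}\cap(0,t)$ of measure exactly $t/2$ (the Lebesgue measure is nonatomic). Then $\frac t2 m\leq\int_F f\leq\int_0^{t/2}f^*$ by (1). Dividing by $t$ in both cases yields (2).

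The main obstacle is the measure-theoretic bookkeeping of the lower median. Specifically, one must verify that $\operatorname{meas}\{f\ge m\}\ge t/2$ and $\operatorname{meas}\{f>m\}\le t/2$ both hold for this particular choice of median, including the edge cases $m=0$ and ties. Once these are in place, the rest is (1) together with nonnegativity.
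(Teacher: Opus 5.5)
Your proposal is correct and follows the paper's proof essentially step for step. It uses the same two-case split, the same bounds via $\int_{\{f>M\}}f$ and $\frac Mt\operatorname{meas}\{f<M\}\le \frac M2$, and the same appeal to (1). Your additions make explicit what the paper leaves implicit: right-continuity to show the infimum defining $M(t)$ is attained, the limit $a\uparrow m$ giving $\operatorname{meas}\{f<m\}\le t/2$, and choosing $F\subset\{f\ge m\}\cap(0,t)$ of measure exactly $t/2$ so that (1) applies.
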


\begin{proof}
Fix $t>0$, and write $A=A(t)$ and $M=M(t)$.

First assume that $A\geq M$. Then
\[
 A-M=\frac1t\int_0^t (f(s)-M)\,ds
 \leq \frac1t\int_{\{f>M\}}(f(s)-M)\,ds
 \leq \frac1t\int_{\{f>M\}} f(s)\,ds.
\]
By the definition of the lower median, the set
\[
 \{s\in(0,t):f(s)>M\}
\]
has measure at most $t/2$. Hence, by the rearrangement property (1),
\[
 A-M\leq \frac1t\int_0^{t/2} f^*(s)\,ds.
\]

Now assume that $M\geq A$. We have
\[
 M-A=\frac1t\int_0^t (M-f(s))\,ds
 \leq \frac1t\int_{\{f<M\}}(M-f(s))\,ds
 \leq \frac M t\operatorname{meas}\{s\in(0,t):f(s)<M\}.
\]
Again by the definition of the lower median,
\[
 \operatorname{meas}\{s\in(0,t):f(s)<M\}\leq \frac t2,
\]
and therefore
\[
 M-A\leq \frac M2.
\]
On the other hand,
\[
 \operatorname{meas}\{s\in(0,t):f(s)\geq M\}\geq \frac t2.
\]
Thus, by the defining property of the decreasing rearrangement,
\[
 \int_0^{t/2} f^*(s)\,ds\geq \frac t2 M.
\]
Consequently,
\[
 M-A\leq \frac1t\int_0^{t/2} f^*(s)\,ds.
\]
The two cases together prove (2).
\end{proof}

This lemma is the continuous counterpart of the discrete estimate
\[
 |m_i-a_i|\leq \frac{y_1+y_2+\cdots+y_{\lfloor i/2\rfloor}}{i},
\]
where $y_1\geq y_2\geq\cdots$ is the decreasing rearrangement of $x_1,x_2,\ldots,x_i$. It shows that the deviation between the mean and the median is controlled by the contribution of the largest half of the values.

\section{A median version of Hardy's inequality}

We shall use the classical continuous Hardy inequality: if $p>1$ and $g\geq0$, then
\[
 \int_0^\infty\left(\frac1t\int_0^t g(s)\,ds\right)^p dt
 \leq \left(\frac p{p-1}\right)^p\int_0^\infty g(t)^p\,dt. \tag{3}
\]
The constant $\bigl(p/(p-1)\bigr)^p$ is best possible. For Hardy's inequality and its extensions, see Hardy, Littlewood and P\'olya \cite{HLP}, Kufner, Maligranda and Persson \cite{KMP}, Opic and Kufner \cite{OK}, and Kufner and Persson \cite{KP}; see also Maz'ya \cite{Mazya} for related functional inequalities.

\begin{theorem}[A median version of Hardy's inequality]
Let $p>1$, and let $f\geq0$ with $f\in L^p(0,\infty)$. Define
\[
 A(t)=\frac1t\int_0^t f(s)\,ds,
\]
and let $M(t)$ be the lower median of $f$ on $(0,t)$. Then
\[
 \int_0^\infty |M(t)-A(t)|^p\,dt
 \leq 2^{1-p}\left(\frac p{p-1}\right)^p
 \int_0^\infty f(t)^p\,dt. \tag{4}
\]
Equivalently,
\[
 \|M-A\|_{L^p(0,\infty)}
 \leq 2^{1/p-1}\frac p{p-1}\|f\|_{L^p(0,\infty)}.
\]
\end{theorem}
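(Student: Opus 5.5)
The plan is to combine the pointwise bound (2) of the Median rearrangement estimate with Hardy's inequality (3), applied after a change of variables that turns the half-interval $(0,t/2)$ into a full interval. Write $G(t)=\frac1t\int_0^{t/2}f^*(s)\,ds$. By (2), $|M(t)-A(t)|\leq G(t)$ for every $t>0$. Raising to the power $p$ and integrating gives
\[
 \int_0^\infty |M(t)-A(t)|^p\,dt\leq\int_0^\infty G(t)^p\,dt .
\]
Measurability of the left side was already addressed in Section 2. The right side is the integral of a nonnegative function of $t$ that is continuous in $t$, so no measurability issue arises there.

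Next I substitute $t=2u$. Then
\[
 G(2u)=\frac1{2u}\int_0^u f^*(s)\,ds=\frac12\cdot\frac1u\int_0^u f^*(s)\,ds,
\]
and $dt=2\,du$. Hence
\[
 \int_0^\infty G(t)^p\,dt=2\cdot 2^{-p}\int_0^\infty\left(\frac1u\int_0^u f^*(s)\,ds\right)^p du .
\]
Applying (3) with $g=f^*\geq0$ bounds this by
\[
 2^{1-p}\left(\frac p{p-1}\right)^p\int_0^\infty f^*(u)^p\,du .
\]

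To finish, I use equimeasurability: $f^*$ and $f$ have the same distribution function, so $\int_0^\infty (f^*)^p=\int_0^\infty f^p$. This is a standard fact about decreasing rearrangements (see \cite{BS}). Since the paper only records property (1) explicitly, I will cite it rather than rederive it. The equivalent norm form of (4) then follows by taking $p$-th roots.

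The only step needing care is the appeal to $\|f^*\|_p=\|f\|_p$ together with the requirement that $f^*$ be finite. Because $f\in L^p$, every level set $\{f>\lambda\}$ with $\lambda>0$ has finite measure. So $f^*$ is well defined, finite, and in $L^p$, and Hardy's inequality applies to it. Apart from this, the argument is a direct chain of the earlier results. Sharpness of the constant is a separate matter and is not part of this statement.
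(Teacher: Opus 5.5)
Your proposal is correct and follows the paper's proof essentially verbatim: the pointwise bound (2), the substitution $t=2u$ giving the factor $2^{1-p}$, Hardy's inequality (3) applied to $f^*$, and the equimeasurability identity $\|f^*\|_p=\|f\|_p$. Your added remarks on measurability and on the finiteness of $f^*$ (via the finite measure of the level sets) are sound details that the paper leaves implicit.
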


\begin{proof}
By Lemma 1,
\[
 |M(t)-A(t)|\leq \frac1t\int_0^{t/2} f^*(s)\,ds.
\]
It follows that
\[
 \int_0^\infty |M(t)-A(t)|^p\,dt
 \leq \int_0^\infty\left(\frac1t\int_0^{t/2} f^*(s)\,ds\right)^p dt.
\]
Put $u=t/2$. Then $dt=2\,du$ and
\[
 \frac1t\int_0^{t/2} f^*(s)\,ds
 =\frac12\cdot \frac1u\int_0^u f^*(s)\,ds.
\]
Therefore
\[
 \int_0^\infty\left(\frac1t\int_0^{t/2} f^*(s)\,ds\right)^p dt
 =2^{1-p}\int_0^\infty\left(\frac1u\int_0^u f^*(s)\,ds\right)^p du.
\]
Applying Hardy's inequality (3) to $f^*$, we get
\[
 \int_0^\infty |M(t)-A(t)|^p\,dt
 \leq 2^{1-p}\left(\frac p{p-1}\right)^p
 \int_0^\infty f^*(u)^p\,du.
\]
Since decreasing rearrangement preserves the $L^p$ norm,
\[
 \int_0^\infty f^*(u)^p\,du=\int_0^\infty f(u)^p\,du.
\]
This proves (4).
\end{proof}

When $p=2$, the constant in the theorem becomes
\[
 2^{1-2}\left(\frac21\right)^2=2,
\]
which agrees exactly with the best constant in Zhang's original problem.

\section{Sharpness of the constant}

We next show that the constant in Theorem 1 cannot be improved. The construction is transparent: zero blocks keep the median equal to zero, while the positive blocks approximate extremal behavior for Hardy's inequality.

\begin{theorem}[Sharpness]
The constant
\[
 2^{1-p}\left(\frac p{p-1}\right)^p
\]
in (4) is best possible.
\end{theorem}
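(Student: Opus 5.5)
The plan is to build, for any $\varepsilon>0$, a nonnegative $f\in L^p(0,\infty)$ with $M\equiv 0$ whose Hardy average nearly attains the extremal ratio, after losing exactly the factor $2^{1-p}$. The device is the interleaving hinted at before the statement. Take a near-extremal function $g$ for Hardy's inequality (3). Then let $f$ alternate, on a very fine scale $h$, between a zero block and a block where $f=2g$, with the zero block coming first in every period. The zero blocks force the lower median to vanish. The doubling compensates for half of the mass being removed, so that $A(t)\approx \frac1t\int_0^t g$. Meanwhile $\int f^p\approx \frac12\int (2g)^p = 2^{p-1}\int g^p$.

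\textbf{Step 1: choice of $g$.} Since $(p/(p-1))^p$ is best possible in (3), and continuous compactly supported functions are dense in $L^p$ (with the Hardy operator bounded on $L^p$), choose $g\geq 0$ continuous with support in some $[a,b]\subset(0,\infty)$. Set $G(t)=\frac1t\int_0^t g$ and require
\[
\int_0^\infty G^p\,dt \geq \Bigl(\bigl(\tfrac p{p-1}\bigr)^p-\varepsilon\Bigr)\int_0^\infty g^p\,dt .
\]

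\textbf{Step 2: the construction.} For $h>0$ define $f_h(s)=0$ on each $[2kh,(2k+1)h)$ and $f_h(s)=2g(s)$ on each $[(2k+1)h,(2k+2)h)$, for $k=0,1,2,\dots$. For every $t>0$ the set $\{s\in(0,t): f_h(s)\le 0\}$ contains the zero blocks meeting $(0,t)$. Because each period starts with its zero block, these blocks have total measure at least $t/2$. By the definition of the lower median, $M(t)=0$ for all $t$, so $|M-A|=A_h$.

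\textbf{Step 3: limits as $h\to0$.} By uniform continuity of $g$, on each period $\int_{2kh}^{2kh+2h} f_h = \int_{2kh}^{2kh+2h} g + o(h)$ uniformly. Summing over the periods gives $\bigl|\int_0^t f_h - \int_0^t g\bigr|\le C(h+\omega_g(2h))(b+1)$, and hence $A_h\to G$ pointwise (indeed uniformly on $[a,\infty)$). For domination, note that $A_h=0$ on $(0,a)$, that $A_h\le 2\|g\|_\infty$, and that $A_h(t)\le \|f_h\|_1/t\le 2\|g\|_1/t$. Thus $A_h\le C\min(1,1/t)$, which lies in $L^p$ since $p>1$. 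Dominated convergence gives $\int A_h^p\to\int G^p$. Similarly, $\int f_h^p = 2^p\int_{\text{positive blocks}} g^p \to 2^{p-1}\int g^p$, again by uniform continuity of $g^p$.

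\textbf{Step 4: conclusion.} Combining Steps 1 and 3,
\[
\liminf_{h\to0}\frac{\int |M-A_h|^p}{\int f_h^p} = \frac{\int G^p}{2^{p-1}\int g^p} \ge 2^{1-p}\Bigl(\bigl(\tfrac p{p-1}\bigr)^p-\varepsilon\Bigr).
\]
Letting $\varepsilon\to0$ shows that no constant smaller than $2^{1-p}(p/(p-1))^p$ can work in (4).

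The main obstacle is the bookkeeping in Step 3, namely justifying the passage to the limit uniformly in $t$, including the tail $t\to\infty$. The domination bound $C\min(1,1/t)$ handles this. A secondary point is the median convention: ordering each period as zero block first and $2g$ block second is exactly what guarantees $M(t)=0$ for every $t$, not just for $t$ at period endpoints.
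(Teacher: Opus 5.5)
Your proof is correct, but it takes a different route from the paper.

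The paper gives an explicit, self-contained example. It alternates unit-length zero blocks with unit blocks on which $f_N=k^{-1/p}$, $k\le N$. The median vanishes, and $A_N$ is bounded below blockwise by $S_{k-1}/(2k)$ and $S_k/(2k+1)$. The asymptotics $S_k\sim\frac p{p-1}k^{1-1/p}$, together with the divergence of $\sum 1/k$, then give the ratio as $N\to\infty$. In effect the paper re-derives the sharpness of Hardy's constant with the classical $k^{-1/p}$ profile. Its factor $2^{1-p}$ comes from dilation: $A_N(t)$ behaves like $\frac12 G(t/2)$, which contributes $2^{-p}$, and the change of variables supplies the extra factor $2$.

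You instead take the sharpness of (3) as a black box and transfer it. You keep the scale of $g$ fixed, double the amplitude, and let the block length $h\to0$, so that $A_h\to G$ while $\int f_h^p\to 2^{p-1}\int g^p$. This costs a density step (using boundedness of the Hardy operator, and positive parts to keep $g\ge0$), a modulus-of-continuity estimate, and dominated convergence. In exchange it avoids any asymptotics of $S_k$ and the implicit Stolz--Ces\`aro comparison of divergent sums.

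Your route also gives a slightly more structural conclusion. For \emph{every} continuous compactly supported $g\ge0$ the ratio converges to exactly $2^{1-p}\int G^p/\int g^p$. So the best constant in (4) is $2^{1-p}$ times the best Hardy constant, whatever the latter's value.

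The paper's construction, by contrast, is fully explicit. It carries over essentially verbatim to the discrete inequality (5), where the scale is fixed and your $h\to0$ limit is not available.
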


\begin{proof}
For a positive integer $N$, define $f_N$ as follows. For $k=1,2,\ldots,N$, let
\[
 f_N(t)=0,\qquad 2k-2\leq t<2k-1,
\]
and
\[
 f_N(t)=k^{-1/p},\qquad 2k-1\leq t<2k.
\]
For $t\geq 2N$, set $f_N(t)=0$.

On every initial interval $(0,t)$, the zero part occupies at least one half of the measure. Therefore, with the lower-median convention,
\[
 M_N(t)=0.
\]
Set
\[
 S_k=\sum_{j=1}^k j^{-1/p}.
\]
If $t\in[2k-1,2k)$, then
\[
 A_N(t)=\frac{S_{k-1}+(t-(2k-1))k^{-1/p}}{t}\geq \frac{S_{k-1}}{2k};
\]
if $t\in[2k,2k+1)$, then
\[
 A_N(t)=\frac{S_k}{t}\geq \frac{S_k}{2k+1}.
\]
Hence
\[
 \int_0^\infty |M_N(t)-A_N(t)|^p\,dt
 \geq \sum_{k=2}^N\left(\frac{S_{k-1}}{2k}\right)^p
 +\sum_{k=1}^{N-1}\left(\frac{S_k}{2k+1}\right)^p.
\]
On the other hand,
\[
 \int_0^\infty f_N(t)^p\,dt=\sum_{k=1}^N\frac1k.
\]
By an elementary integral estimate,
\[
 S_k=\sum_{j=1}^k j^{-1/p}\sim \frac p{p-1}k^{1-1/p}\qquad (k\to\infty).
\]
Thus
\[
 \left(\frac{S_k}{2k}\right)^p
 \sim 2^{-p}\left(\frac p{p-1}\right)^p\frac1k,
\]
and similarly
\[
 \left(\frac{S_k}{2k+1}\right)^p
 \sim 2^{-p}\left(\frac p{p-1}\right)^p\frac1k.
\]
Combining the two parts, we obtain
\[
 \liminf_{N\to\infty}
 \frac{\displaystyle\int_0^\infty |M_N(t)-A_N(t)|^p\,dt}
 {\displaystyle\int_0^\infty f_N(t)^p\,dt}
 \geq 2^{1-p}\left(\frac p{p-1}\right)^p.
\]
Together with the upper bound proved in Theorem 1, this proves sharpness.
\end{proof}

\section{The discrete form}

The discrete counterpart of Theorem 1 is as follows. Let $p>1$ and let $x_1,x_2,\ldots,x_n\geq0$. Define
\[
 a_i=\frac{x_1+x_2+\cdots+x_i}{i},
\]
and let $m_i$ be the lower median of $x_1,x_2,\ldots,x_i$. Then
\[
 \sum_{i=1}^n |m_i-a_i|^p
 \leq 2^{1-p}\left(\frac p{p-1}\right)^p\sum_{i=1}^n x_i^p. \tag{5}
\]

We give the details. For each $i$, rearrange $x_1,x_2,\ldots,x_i$ in decreasing order:
\[
 y_1^{(i)}\geq y_2^{(i)}\geq\cdots\geq y_i^{(i)}\geq0.
\]
The same half-measure argument as in Lemma 1 gives
\[
 |m_i-a_i|\leq
 \frac{y_1^{(i)}+y_2^{(i)}+\cdots+y_{\lfloor i/2\rfloor}^{(i)}}{i}. \tag{6}
\]
Now rearrange all $x_1,x_2,\ldots,x_n$ in decreasing order:
\[
 x_1^*\geq x_2^*\geq\cdots\geq x_n^*\geq0.
\]
Since the sum of the largest $r$ terms among any initial $i$ terms is no larger than the sum of the largest $r$ terms among all $n$ terms, (6) implies
\[
 |m_i-a_i|\leq \frac1i\sum_{j=1}^{\lfloor i/2\rfloor}x_j^*. \tag{7}
\]
Consequently,
\[
 \sum_{i=1}^n |m_i-a_i|^p
 \leq \sum_{i=1}^n\left(\frac1i\sum_{j=1}^{\lfloor i/2\rfloor}x_j^*\right)^p.
\]
Grouping even and odd indices, and adding a harmless nonnegative term if the last odd term is absent, we have for $r\geq1$,
\[
 \left(\frac1{2r}\sum_{j=1}^r x_j^*\right)^p
 +\left(\frac1{2r+1}\sum_{j=1}^r x_j^*\right)^p
 \leq 2^{1-p}\left(\frac1r\sum_{j=1}^r x_j^*\right)^p.
\]
By the discrete Hardy inequality
\[
 \sum_{r=1}^n\left(\frac1r\sum_{j=1}^r b_j\right)^p
 \leq \left(\frac p{p-1}\right)^p\sum_{r=1}^n b_r^p\qquad (b_r\geq0),
\]
we get
\[
 \sum_{i=1}^n |m_i-a_i|^p
 \leq 2^{1-p}\left(\frac p{p-1}\right)^p\sum_{r=1}^n (x_r^*)^p
 =2^{1-p}\left(\frac p{p-1}\right)^p\sum_{r=1}^n x_r^p.
\]
This proves (5).

In particular, for $p=2$,
\[
 \sum_{i=1}^n (m_i-a_i)^2\leq 2\sum_{i=1}^n x_i^2,
\]
which is the conclusion of the original competition problem.

Finally, we show that the discrete constant also cannot be improved. Let
\[
 x_{2k-1}=0,
 \qquad
 x_{2k}=k^{-1/p},
 \qquad k=1,2,\ldots,N.
\]
For every initial segment, at least half of the terms are zero, and hence the lower median can be taken to be $m_i=0$. Put
\[
 S_k=\sum_{j=1}^k j^{-1/p}.
\]
Then
\[
 a_{2k-1}=\frac{S_{k-1}}{2k-1},
 \qquad
 a_{2k}=\frac{S_k}{2k}.
\]
Using
\[
 S_k\sim \frac p{p-1}k^{1-1/p},
\]
we obtain
\[
 \frac{\displaystyle\sum_{i=1}^{2N}|m_i-a_i|^p}
 {\displaystyle\sum_{i=1}^{2N}x_i^p}
 \longrightarrow
 2^{1-p}\left(\frac p{p-1}\right)^p.
\]
Therefore the constant in the discrete inequality is also sharp.

\end{document}